\date{09/06/2021}
\theoremstyle{definition} 
\newtheorem{Def}{Definition}[section]
\theoremstyle{plain} 
\newtheorem{Th}[Def]{Theorem}
\newtheorem*{nonum-Th}{Theorem}     
\newtheorem*{nonum-Cor}{Corollary}  
\theoremstyle{remark} 
\newtheorem{Rmk}[Def]{Remark}
\numberwithin{equation}{section}
\newcommand{\sse}{\subseteq}	      
\newcommand{\suchthat}{\;\;|\;\;}
\renewcommand{\leq}{\leqslant}
\renewcommand{\geq}{\geqslant}
\renewcommand{\phi}{\varphi}
\newcommand{\bC}{\mathbb{C}}        
\newcommand{\bN}{\mathbb{N}}        
\newcommand{\bR}{\mathbb{R}}        
\newcommand{\sM}{\mathcal{M}}       
\newcommand{\dbar}{\bar{\partial}}
\DeclareMathOperator{\tr}{tr}
\DeclareMathOperator{\rk}{rk}
\DeclareMathOperator{\im}{im}
\DeclareMathOperator{\Hom}{Hom}
\DeclareMathOperator{\End}{End}
\newcommand{\word}[1]{\ \text{#1}\ } 
\newcounter{a}
\else\stepcounter{a}\fi
\newcommand*{\QEDA}{\hfill\ensuremath{\boxminus}} 
\DeclareRobustCommand{\QEDA}{\ifmmode
  \else \leavevmode\unskip\penalty9999 \hbox{}\nobreak\hfill \fi
  \quad\hbox{\QEDAasymbol}}
\newcommand{\QEDAasymbol}{$\boxminus$} 
\begin{document}
\thispagestyle{plain}

\begin{center}
\Large
\textsc{Stratifications on the Nilpotent Cone\\of the moduli space of Hitchin pairs}
\end{center}

\begin{center}
  08/06/2021
\end{center}

\begin{center}
  \textit{Peter B. Gothen\footnote{Partially supported by Centro de Matem\'atica da Universidade do Porto (CMUP),
      financed by national funds through FCT---Funda\c{c}{\~a}o para a
      Ci{\^e}ncia e a Tecnologia, I.P., under the project UIDB/00144/2020.}}\\
  \small Centro de Matem\'atica da Universidade do Porto and\\
  \small Departamento de Matem\'atica\\
  \small Faculdade de Ci\^encias da Universidade do Porto\\
  \small Rua do Campo Alegre, s/n\\
  \small 4197-007 Porto, Portugal\\
  \small e-mail: \texttt{pbgothen@fc.up.pt}
\end{center}

\begin{center}
  \textit{Ronald A. Z\'u\~niga-Rojas\footnote{Supported by 
  Universidad de Costa Rica through Escuela de Matem\'atica, specifically through CIMM and CIMPA, under projects {\tt 820-B5-202}, {\tt 820-B8-224} and {\tt 821-C1-010}. Partially supported by FCT (Portugal) through grant {\tt SFRH/BD/51174/2010}.}}\\
  \small Centro de Investigaciones Matem\'aticas y Metamatem\'aticas CIMM y\\
  \small Centro de Investigaci\'on en Matem\'atica Pura y Aplicada CIMPA\\
  \small Escuela de Matem\'atica, Universidad de Costa Rica UCR\\
  \small San Jos\'e 11501, Costa Rica\\
  \small e-mail: \texttt{ronald.zunigarojas@ucr.ac.cr}
\end{center}

\noindent
\textbf{Abstract.}  

We consider the problem of finding the limit at infinity
(corresponding to the downward Morse flow) of a Higgs bundle in the
nilpotent cone under the natural $\bC^*$-action on the moduli
space. For general rank we provide an answer for Higgs bundles with
regular nilpotent Higgs field, while in rank three we give the complete
answer. Our results show that the limit can be described in terms of
data defined by the Higgs field, via a filtration of the underlying
vector bundle.

\medskip

\noindent\textbf{Keywords:} 
Higgs Bundles, Hitchin Pairs, Hodge Bundles, Moduli Spaces, Nilpotent Cone, Vector Bundles.

\noindent\textbf{MSC~2020 classification:} Primary \texttt{14H60}; Secondary \texttt{14D07}.	

\section*{Introduction}
\label{sec:0:intro}
\addcontentsline{toc}{section}{Introduction}

Over thirty three years ago, Hitchin~\cite{hit2} introduced Higgs
bundles on Riemann surfaces through dimensional reduction of the
self-duality equations from $\bR^4$ to $\bR^2$, and they appeared in
the work of Simpson \cite{sim1} motivated by uniformisation problems
for higher dimensional varieties. Since then, the moduli
space of Higgs bundles has become an important topic of research in
many areas of geometry and mathematical physics and there are even
ramifications to number theory via the Langlands programme. Much more
detailed information and many references to relevant work can be found
in the following selection
of (mainly) expository papers: \cite{bgg}, \cite{garcia}, \cite{got},
\cite{hausel:2013}, \cite{rayan}, \cite{schaposnik}. 

A Higgs bundle on a Riemann surface is a pair consisting of a
holomorphic vector bundle together with an endomorphism valued
holomorphic one-form, called the Higgs field.

Taking the characteristic polynomial of the Higgs field defines the Hitchin
map, which is a proper map from the moduli space of Higgs bundles to a
vector space. It makes the moduli space of Higgs
bundles into an algebraic completely integrable Hamiltonian
system, and thus the generic fibre of the Hitchin map is an abelian
variety. On the other hand, the fibre over zero, named the nilpotent
cone by Laumon~\cite{laumon}, is highly singular and it encodes many important
properties of the moduli space: for example, the moduli space deformation
retracts onto it.

Another important attribute of the moduli space of Higgs bundles is
that it carries an action of the non-zero complex numbers $\bC^*$ via
multiplication on the Higgs field. The limit of the action on a Higgs
bundle of $z\in\bC^*$
as $z\to 0$ always exists, and thus the moduli space has an associated
Bia{\l}ynicki-Birula stratification.
On the other hand, the limit as $z\to\infty$  exists if and
only if the Higgs bundle belongs to the nilpotent cone. These limits
are fixed points of $\bC^*$-action. Such fixed points are
known as Hodge bundles and are all contained in the nilpotent cone. 

In our earlier work \cite{gzr,z-r1} (see also \cite{z-r0,z-r2}) we investigated the limit as $z\to 0$ of any Higgs bundle and its relation to the Harder--Narasimhan filtration of the underlying vector bundle, in order to better understand the relation between the Bia{\l}ynicki-Birula and Shatz stratifications of the moduli space (the latter being defined by the Harder--Narasimhan type). The case of rank two had already been considered by Hitchin \cite{hit2}, who observed that in this case the two stratifications coincide. This is no longer the case in higher rank and, indeed, the general problem is quite intricate; a complete solution is given in \cite{gzr} for rank 3.  The companion problem of finding the limit of Higgs bundle in the nilpotent cone as $t\to\infty$ was also considered in the second author's PhD thesis~\cite{z-r2} and the result of the present article are essentially contained there. We have decided to write them up here in view of recent interest in the fine structure of the Bia{\l}ynicki-Birula stratification of the nilpotent cone.

Our main results are as follows. In the case when the Higgs field of a
Higgs bundle in the nilpotent cone is a regular nilpotent, there is an
associated graded Higgs bundle induced from the filtration obtained by
taking the kernels of iterates of $\Phi$. This Higgs bundle is in fact
a Hodge bundle and we show that it is exactly the limit of the action
of $z\in\bC^*$ on the original Higgs bundle as
$z\to\infty$. The precise statement is in
Theorem~\ref{NilpotentConeRkR} below. On the other hand, when the
Higgs field is not a regular nilpotent, the situation is again more
intricate. We analyse the situation completely in the case of rank $3$
and show that there is a refinement of the aforementioned filtration
obtained using also the image of the Higgs field, which allows to
identify the limit as a function of topological invariants of the filtration. It is
notable that the answer depends only on properties of the Higgs field
and not on the stability properties of the underlying vector bundle
(as opposed to situation for $z\to 0$).  The precise statement is in
Theorem~\ref{NilpotentConeRk3} below. 

We mention that in this paper we work with the moduli space of
Hitchin pairs, since our results and methods are in this
generality: this means that we allow the Higgs field to be twisted by
any holomorphic line bundle of degree greater than or equal to that of
the canonical bundle of the Riemann surface, rather than just the
canonical bundle.

This paper is organised as follows. In Section~\ref{sec:1} we give
some necessary preliminaries about Hitchin pairs, Higgs bundles and
their moduli spaces, and we introduce the Hitchin map, the Nilpotent
Cone and the $\bC^*$-action. Then, in Section~\ref{sec:regular}, we
present the result in general rank for Hitchin pairs with regular
nilpotent Higgs field. Finally, in Section~\ref{sec:3}, we give the
complete result for Hitchin pairs of rank 3 with nilpotent Higgs
field.

\section{Preliminaries on Hitchin pairs and their moduli}
\label{sec:1}

In this section we review some standard facts about Hitchin pairs and
their moduli. Details can be found in, for example, Hitchin
\cite{hit2,hitchin:1987b}, Nitsure
\cite{nit} and 
Simpson \cite{sim2}. 

Let $X$ be a compact, connected and oriented Riemann surface of genus
$g \geq 2$ and let $L\to X$ be a holomorphic line bundle.

\begin{Def}
  A {\em Hitchin pair} over $X$ is a pair $(E, \Phi)$ where the
  {\em underlying vector bundle} $E \to X$ is a holomorphic vector
  bundle and the {\em Higgs field} $\Phi: E \to E\otimes L$ is
  holomorphic.
\end{Def}

If we need to specify the line bundle $L$, we say that the Hitchin
pair $(E,\Phi)$ is \emph{twisted by $L$}.

\begin{Def}
  A {\em Higgs bundle} over $X$ is a Hitchin pair $(E, \Phi)$ twisted
  by the canonical line bundle $K=K_X=T^{*}X$.
\end{Def}

The {\em slope} of a vector bundle $E$ is the quotient between its
degree and its rank: 
\[
 \mu(E)
 =
 \deg(E)/\rk(E). 
\]
Recall that a vector bundle $E$ is {\em semistable} if
$\mu(F)\leq\mu(E)$ for all non-zero holomorphic 
subbundles $F \subseteq E$, {\em stable} if it is semistable and strict
inequality holds for all non-zero proper $F$, and {\em polystable} if
it is the direct sum of stable bundles, all of the same slope. The
\emph{slope of a Hitchin pair} is the slope of its underlying vector
bundle and the stability condition is defined analogously to the
vector bundle situation, except that the slope condition is applied only
to $\Phi$-invariant subbundles, i.e., holomorphic subbundles $F\subseteq E$ such that
$\Phi(F)\subseteq F\otimes L$.

The moduli space $\sM_{L}(r,d)$ of $S$-equivalence classes of
semistable rank $r$ and degree $d$ Higgs bundles was first constructed by
Nitsure \cite{nit}. The points of $\sM_{L}(r,d)$ correspond to
isomorphism classes of polystable Hitchin pairs.  When $r$ and $d$ are
co-prime any semistable Hitchin pair is automatically stable. Henceforth we shall assume that we are in
this situation and that $\deg(L)\geq 2g-2$. Then $\sM_{L}(r,d)$ is a
smooth complex manifold of complex dimension 
\[
  r^2\deg(L)+1+\dim H^1(X,L). 
\]
The moduli space is non-compact but there is a proper map, the so-called \emph{Hitchin 
map}, defined by:
\begin{equation}
 \begin{array}{r c l}
   \chi: \sM_{L}(r,d)   & \longrightarrow & H^0(X,L) \oplus {\dots} \oplus H^0(X,L^r)\\ 
   \big[ (E, \Phi) \big] & \longmapsto     & \big(\tr(\Phi),{\dots},\det(\Phi)\big)
 \end{array}
  \label{(r,d)LHitchinMap}
\end{equation}
whose components are holomorphic sections obtained as the coefficients of the (fibrewise) characteristic
polynomial of $\Phi$.
When $L = K$, the moduli space is a holomorphic symplectic manifold and the Hitchin map endows it with an algebraically completely integrable Hamiltonian system whose generic fibre is an abelian variety. (For general $L$, this has been generalised to the Poisson setting by Bottacin \cite{bottacin:1995} and Markman \cite{markman:1994}.) On the other hand, the fibre of the Hitchin map over zero, 
\[
\chi^{-1}(0):=
\left\{ 
\big[(E,\Phi)\big] \in \sM_L(r,d)\suchthat \chi(\Phi) = 0 
\right\} 
\]
is known as the \emph{Nilpotent Cone} in the moduli space, and has a complicated structure with several irreducible components. 

Next we review some standard facts about the holomorphic action of the
multiplicative group $\bC^{*}$ on
$\sM_{L}(r,d)$. The action is defined by the multiplication: 
\[
z\cdot (E,\Phi)\mapsto (E,z\cdot \Phi). 
\]
The limit $(E_0,\phi_0)=\displaystyle \lim_{z\to0} (E, z\cdot\Phi)$ 
exists for all $(E,\Phi)\in \sM(r,d)$.
On the other hand, it follows from the properties of the Hitchin map
that the limit $(E^\infty,\Phi^\infty)=\displaystyle \lim_{z\to\infty}(E, z\cdot\Phi)$ 
exists if and only if $(E,\Phi)$ belongs to the nilpotent cone $\chi^{-1}(0)$.
When the limit of $(E, z\cdot\Phi)$ as $z\to 0$ or $z\to\infty$ exists
it is fixed by the
$\bC^{*}$-action. Moreover, a Hitchin pair $(E,\Phi)$ is a fixed point of the $\bC^*$-action 
if and only if it is a \emph{Hodge bundle}, i.e., there is a decomposition
\begin{math}
  E=\bigoplus_{j=1}^p E_j
\end{math}
with respect to which the Higgs field has weight one: $\Phi\colon
E_j\to E_{j+1}\otimes L$. The \emph{type} of the Hodge bundle
$(E,\Phi)$ is $\big(\rk(E_1),\dots,\rk(E_p)\big)$.

We shall consider the moduli space from the complex analytic
point of view. For this, fix a $C^\infty$ complex vector bundle
$\mathcal{E}\to X$ of rank $r$ and degree $d$. A holomorphic
structure on $\mathcal{E}$ is given by a $\dbar$-operator
\[
  \dbar_E\colon A^0(\mathcal{E}) \to {A}^{0,1}(\mathcal{E})
\]
and we thus obtain a holomorphic vector bundle 
$E=(\mathcal{E},\dbar_E)$.  A Hitchin pair 
$(E,\Phi)$ arises from a pair $(\dbar_E,\Phi)$ consisting of a
$\dbar$-operator and a Higgs field $\Phi\in
A^{0}\bigl(\End(E)\otimes L\bigr)$ which is holomorphic, i.e.,
$\dbar_{E,L}\Phi=0$, where $\dbar_{E,L}$ denotes the $\dbar$-operator
on the underlying smooth bundle of $\End(E)\otimes L$ defining the
holomorphic structure. The natural
symmetry group is the {\em complex gauge group}
\[
  \mathcal{G}^\bC = 
  \left\{
  g\colon \mathcal{E}\to \mathcal{E}\suchthat 
  g \word{is a} C^{\infty}-\textmd{bundle isomorphism}
  \right\},
\]
which acts on pairs $(\dbar_E,\Phi)$ in the standard way:
\begin{displaymath}
  g\cdot(\dbar_E,\Phi)=(g\circ\dbar_E\circ g^{-1},g\circ\Phi\circ g^ {-1}).
\end{displaymath}
The moduli space can then be viewed as the quotient\footnote{See {Atiyah~\&~Bott~\cite[Section~14]{atbo}} for general holomorphic bundles, Hitchin~\cite[Section~3]{hit2} for Higgs bundles and also {Hausel~\&~Thaddeus~\cite[Section~8]{hath1}} for Hitchin pairs.}
\begin{displaymath}
  \sM_{L}(r,d) = 
  \left\{
  (\dbar_E,\Phi)\suchthat 
  \Phi \word{is holomorphic and}
  (E,\Phi)\ \textmd{is polystable}
  \right\} 
  / \mathcal{G}^\bC.
\end{displaymath}

\section[Regular nilpotents]{Limit at infinity for regular nilpotent Higgs field}
\label{sec:regular}

Let $(E,\Phi)$ be a stable Hitchin pair of rank $r$ and degree $d$ coprime: $GCD(r,d) = 1$, which represents a point in the nilpotent cone $\chi^{-1}(0) \sse \mathcal{M}_{L}(r,d)$. Let $p \in \bN$ be the least positive integer such that $\Phi^p=0$ and $\Phi^{p-1} \neq 0$. Then $p\leq r$ and $\Phi$ is \emph{regular} if $p=r$. Since we are working over a Riemann surface, taking the saturation of the kernel sheaf of $\Phi^{p-j+1}\colon E \to E\otimes L^{p-j+1}$ defines a subbundle $E_j\subset E$. We obtain in this way a filtration of $E$,
\begin{equation}
  \label{eq:1}
  E = E_{1} \supset E_{2} \supset \dots \supset E_{r} \supset E_{r+1} = 0
\end{equation}
and, clearly,
\begin{equation}
  \label{eq:2}
  \Phi(E_j)\subseteq E_{j+1}\otimes L.
\end{equation}
Define $\bar{E}_j = E_j / E_{j+1}$. Then, in view of \eqref{eq:2},
$\Phi$ induces a map
$\varphi_j\colon \bar{E}_j\to \bar{E}_{j+1}\otimes L$. Note that if $\Phi$ is
regular then the inclusions in \eqref{eq:1} are all strict of co-dimension one.
Thus, when $\Phi$ is regular, we obtain
a Hodge bundle of rank $r$ and degree $d$ of type $(1,\dots,1)$:
\begin{equation}
  \label{eq:4}
  (\bar{E},\bar{\Phi}) =
  \biggl(\bigoplus_{j=1}^r\bar{E}_j,\sum_{j=1}^{r-1}\varphi_j\biggr)
= 
 \biggl(\bigoplus_{j=1}^r \bar{E}_j,
 \left(
    \begin{array}[c]{c c c c c}
          0       &     \dots     &   \dots   &     \dots     &    0\\
     \varphi_{1}  &      0      &   \dots   &     \dots     &    0\\
          0       & \varphi_{2} &    0    &     \dots     &    0\\
       \vdots     &   \ddots    & \ddots  &    \ddots   & \vdots\\
          0       &     \dots     &    0    & \varphi_{r-1} &    0 
    \end{array}
 \right)
 \biggr).
 \end{equation}

\begin{Th}\label{NilpotentConeRkR} 
Let $(E,\Phi)$ be a stable Hitchin pair of rank $r$ and degree $d$ coprime: $GCD(r,d) = 1$, which represents a point in the nilpotent cone $\chi^{-1}(0) \sse \mathcal{M}_{L}(r,d)$ and assume that $\Phi$ is a regular nilpotent, i.e., $\Phi^{r-1}\neq 0$. Then $\displaystyle \lim_{z\to\infty}(E,z\cdot\Phi)=(\bar{E},\bar{\Phi})$, where $(\bar{E},\bar{\Phi})$ is given by \eqref{eq:4}. In particular the limit is a Hodge bundle of type $(1,\dots,1)$.
\end{Th}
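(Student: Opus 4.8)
The plan is to realise the limit explicitly by conjugating the pair $(\dbar_E,z\cdot\Phi)$ by a one-parameter family of complex gauge transformations adapted to the filtration \eqref{eq:1}, and then to match the resulting gauge-theoretic limit with the limit taken in the moduli space. First I would fix a $C^\infty$ splitting $\mathcal{E}\cong\bigoplus_{j=1}^{r}\bar{\mathcal{E}}_j$ compatible with \eqref{eq:1}, so that $E_k$ corresponds to $\bar{E}_k\oplus\dots\oplus\bar{E}_r$. With respect to this splitting the holomorphicity of the subbundles $E_k$ forces $\dbar_E=D+\beta$, where $D=\bigoplus_j\dbar_{\bar{E}_j}$ is the diagonal part and $\beta$ is strictly lower triangular, while \eqref{eq:2} forces $\Phi=N+\Psi$, where $N=\sum_j\varphi_j$ is the sub-diagonal part inducing the maps of \eqref{eq:4} and $\Psi$ collects the blocks lying two or more steps below the diagonal.

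Next I would introduce the gauge transformations $g(z)=\operatorname{diag}(1,z^{-1},\dots,z^{-(r-1)})\in\mathcal{G}^{\bC}$, acting by the scalar $z^{-(j-1)}$ on $\bar{\mathcal{E}}_j$, and compute the conjugated pair. Since these scalars are constant on $X$ they commute with $D$, and a direct block computation gives
\begin{equation*}
 g(z)\cdot(\dbar_E,z\cdot\Phi)=\Bigl(D+\textstyle\sum_{i>j}z^{\,j-i}\,\beta_{ij},\;N+\sum_{i\geq j+2}z^{\,1+j-i}\,\Psi_{ij}\Bigr).
\end{equation*}
Every correction exponent satisfies $j-i\leq-1$ and $1+j-i\leq-1$, so all the correction terms are $O(z^{-1})$ and the family converges in $C^\infty$, as $z\to\infty$, to $(D,N)=(\bar{E},\bar{\Phi})$ as given in \eqref{eq:4}.

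Since $(E,z\cdot\Phi)$ is stable for every $z\neq0$ (stability depends only on $\Phi$-invariance of subbundles, which is unchanged under rescaling of $\Phi$) and $g(z)$ is a gauge transformation, each $g(z)\cdot(E,z\cdot\Phi)$ is a stable pair representing the moduli point $[(E,z\cdot\Phi)]$. To conclude that $[(E,z\cdot\Phi)]\to[(\bar{E},\bar{\Phi})]$ it remains to verify that $(\bar{E},\bar{\Phi})$ is itself stable, so that the quotient map onto $\sM_L(r,d)$ is continuous there. For this I would compare any proper $\bar{\Phi}$-invariant subbundle $F\sse\bar{E}$ with the saturation $G$ of its associated graded subsheaf, taken with respect to the induced filtration $F\cap(\bar{E}_k\oplus\dots\oplus\bar{E}_r)$: then $G$ is a \emph{graded} $\bar{\Phi}$-invariant subbundle with $\rk G=\rk F$ and $\deg G\geq\deg F$. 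Because the $\bar{E}_j$ are line bundles and every $\varphi_j$ is non-zero (which is exactly regularity of $\Phi$), the only graded invariant subbundles are the $\bar{E}_k\oplus\dots\oplus\bar{E}_r$, each of slope $\mu(E_k)$ for the corresponding $\Phi$-invariant subbundle $E_k$ from \eqref{eq:1}; stability of $(E,\Phi)$ then yields $\mu(F)\leq\mu(G)=\mu(E_k)<\mu(E)=\mu(\bar{E})$, so $(\bar{E},\bar{\Phi})$ is stable. Continuity of the quotient map at this stable point, together with Hausdorffness of $\sM_L(r,d)$ and the fact that the limit is already known to exist, then identifies it as $[(\bar{E},\bar{\Phi})]$. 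The block computation of the second paragraph is routine; the main obstacle is this last step, that is, guaranteeing that the \emph{explicit} gauge-theoretic limit actually computes the moduli-space limit. This needs convergence in a topology strong enough for the quotient map to be continuous and, crucially, the stability of the limiting Hodge bundle, which is precisely where the stability of the original pair $(E,\Phi)$ must be fed in.
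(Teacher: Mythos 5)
Your proposal is correct and follows essentially the same route as the paper's proof: the same smooth splitting adapted to the kernel filtration, the same one-parameter family of diagonal gauge transformations (your $g(z)$ is precisely the inverse of the paper's, so the conjugation computation is identical), the same $O(z^{-1})$ decay of all sub-subdiagonal terms giving convergence in the configuration space, and the same final reduction to proving stability of the limiting Hodge bundle from stability of $(E,\Phi)$. The only minor difference is in that last step: where the paper directly asserts that every $\bar{\Phi}$-invariant subbundle of $\bar{E}$ is of the graded form $\bar{E}_l\oplus\dots\oplus\bar{E}_r$, you reduce to the graded case by passing to the saturated associated graded subsheaf, a slightly more careful variant of the same stability argument.
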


\begin{proof}
Using the notation introduced above we may consider a smooth splitting
\[
 E \underset{C^{\infty}}{\cong} \bigoplus_{j=1}^r \bar{E}_j.
\]
Then the Higgs field takes the triangular form:
\[
  \Phi =
 \left(
    \begin{array}[c]{c c c c c}
          0         &    \dots     &      \dots       &      \dots       &    0\\
     \varphi_{21}   &      0       &      \dots       &      \dots       &    0\\
     \varphi_{31}   & \varphi_{32} &       0          &      \dots       &    0\\
       \vdots       &    \ddots    &     \ddots       &     \ddots       & \vdots\\
     \varphi_{r, 1} &    \dots     & \varphi_{r, r-2} & \varphi_{r, r-1} &    0 
    \end{array}
 \right)
\]
where $\varphi_{ij}\colon\bar{E}_{j} \rightarrow \bar{E}_{i} \otimes L$ and we note that $\varphi_{j,j-1}=\varphi_j$ in the notation introduced above. The $\dbar$-operator defining the holomorphic structure on $E$ is of the form:
\[
  \dbar_E =
 \left(
    \begin{array}[c]{c c c c }
     \dbar_1      &    0     & \dots          &    0  \\
     \beta_{21}   & \dbar_2  & \ddots         & \vdots\\
     \vdots       & \ddots   & \ddots         &    0\\
     \beta_{r,1}  & \dots    & \beta_{r,r-1}  & \dbar_r
    \end{array}
 \right)
\]
where $\dbar_j$ is the corresponding holomorphic structure of $\bar{E}_j$, and $\beta_{ij} \in \Omega^{0,1}\big(X,\Hom(\bar{E}_j,\bar{E}_i)\big)$.

We now define a family of complex $C^\infty$-gauge transformations $g(z) \in \mathcal{G}^{\bC}$ by:
\[
  g(z)=
 \left(
    \begin{array}[c]{c c c c }
        1   &    0   & \dots  &    0\\
        0   &    z   & \ddots & \vdots\\
     \vdots & \ddots & \ddots &    0\\
        0   & \dots  &    0   & z^{r-1}
    \end{array}
 \right).
\]
Then
\begin{multline*}
  g^{-1}(z)(z \cdot \Phi)g(z) = 
  \\ \\
  \left(
    \begin{array}[c]{c c c c }
      1      & 0      & \dots  & 0\\
      0      & z^{-1} & \ddots & \vdots\\
      \vdots & \ddots & \ddots & 0\\
      0      & \dots  & 0      & z^{1-r}
    \end{array}
  \right) \left(
    \begin{array}[c]{c c c c}
      0          &      \dots  &       \dots       &    0\\
      z\varphi_{21}    &       0     &       \dots       &    0\\
      \vdots        &      \ddots &      \ddots       & \vdots\\
      z\varphi_{r, 1}  &      \dots  & z\varphi_{r, r-1} &    0 
    \end{array}
  \right) \left(
    \begin{array}[c]{c c c c }
      1      & 0      & \dots  & 0\\
      0      & z      & \ddots & \vdots\\
      \vdots & \ddots & \ddots & 0\\
      0      & \dots  & 0      & z^{r-1}
    \end{array}
  \right) =
  \\ \\
\left(
    \begin{array}[c]{c c c c c}
      0           &      \dots    &         \dots          &      \dots       &    0\\
      \varphi_{21}       &       0       &         \dots          &      \dots       &    0\\
      z^{-1}\varphi_{31}    &  \varphi_{32} &          0             &      \dots       &    0\\
      \vdots          &     \ddots    &        \ddots          &      \ddots      & \vdots\\
      z^{1-p}\varphi_{r, 1} &      \dots    & z^{-1}\varphi_{r, r-2} & \varphi_{r, r-1} &    0 
    \end{array}
  \right) \xrightarrow[z\rightarrow \infty]{} \left(
    \begin{array}[c]{c c c c c}
      0         &      \dots     &  \dots   &   \dots          &    0\\
      \varphi_{21}     &       0        &  \dots   &   \dots          &    0\\
      0         &  \varphi_{32}  &    0     &   \dots          &    0\\
      \vdots        &     \ddots     &  \ddots  &   \ddots         & \vdots\\
      0         &      \dots     &    0     & \varphi_{r, r-1} &    0 
    \end{array}
  \right) =:\Phi^\infty,
\end{multline*}
and also
\begin{multline*}
  g^{-1}(z)\ \dbar_E\ g(z) = 
  \\ \\
  \left(
    \begin{array}[c]{c c c c }
      1      & 0      & \dots  & 0\\
      0      & z^{-1} & \ddots & \vdots\\
      \vdots & \ddots & \ddots & 0\\
      0      & \dots  & 0      & z^{1-r}
    \end{array}
  \right) \left(
    \begin{array}[c]{c c c c }
      \dbar_1    &    0     &     \dots      &    0  \\
      \beta_{21} & \dbar_2  &    \ddots      & \vdots\\
      \vdots     & \ddots   &    \ddots      &    0\\
      \beta_{r, 1} & \dots    & \beta_{r, r-1} & \dbar_r
    \end{array}
  \right) \left(
    \begin{array}[c]{c c c c }
      1      & 0      & \dots  & 0\\
      0      & z      & \ddots & \vdots\\
      \vdots & \ddots & \ddots & 0\\
      0      & \dots  & 0      & z^{r-1}
    \end{array}
  \right) = 
  \\ \\
  \left(
    \begin{array}[c]{c c c c }
      \dbar_1         &       0     &         \dots        &       0  \\
      z^{-1}\beta_{21}    &   \dbar_2   &       \ddots         &   \vdots\\
      \vdots          &   \ddots    &       \ddots         &       0\\
      z^{-r}\beta_{r, 1}  &   \dots     & z^{-1}\beta_{r, r-1} &   \dbar_r
    \end{array}
  \right) \xrightarrow[z\rightarrow \infty]{} \left(
    \begin{array}[c]{c c c c }
      \dbar_1 &    0    &     \dots    &     0  \\
      0     & \dbar_2 &    \ddots    &  \vdots\\
      \vdots  & \ddots  &    \ddots    &     0\\
      0     & \dots   &       0      &  \dbar_r
    \end{array}
  \right) =:\dbar_E^\infty,
\end{multline*}
where the limits are taken in the configuration space of all pairs $(\dbar_E,\Phi)$, up to gauge equivalence. Moreover, the fact that $\dbar_E\Phi=0$ immediately implies that $\dbar_E^\infty\Phi^\infty=0$ and, clearly, the Hitchin pair defined by $(\dbar_E^\infty,\Phi^\infty)$ is $(\bar{E},\bar{\Phi})$. Hence, in order to prove that the stated limit is valid in the moduli space, it only remains to prove that this Hitchin pair is stable. For this we observe that the only $\bar{\Phi}$-invariant subbundles of $\bar{E}$
are those of the form
\begin{displaymath}
  \bar{E}_l\oplus\bar{E}_{l+1}\oplus\dots\oplus\bar{E}_{r}\subseteq\bar{E}
\end{displaymath}
and note that the slope of such a subbundle equals that of $E_l\subseteq E$ because they are isomorphic as $C^\infty$-bundles. Thus, since the subbundle $E_l\subseteq E$ is $\Phi$-invariant, the stability of $(\bar{E},\bar{\Phi})$ follows from that of $(E,\Phi)$.
\end{proof}

\begin{Rmk}
  Since in rank two a nilpotent Higgs field is either zero or regular, the preceding theorem, together with the results of our previous paper \cite{gzr}, gives a complete description of the closure of the $\bC^*$-orbit of a rank 2 Hitchin pair in the nilpotent cone. Indeed, as we have just seen, the type of the limiting Hodge bundle as $z\rightarrow\infty$ is determined by the Higgs field and, from \cite[Corollary~3.2]{gzr}, the type of the limiting Hodge bundle as underlying vector bundle. These observations were already made by Hausel~\cite{hau}.
\end{Rmk}

\section[Rank Three Hitchin Pairs]{Rank Three Hitchin Pairs in the Nilpotent Cone}
\label{sec:3}

In this section we determine the limit $\displaystyle\lim_{z\rightarrow
  \infty}(E,z\cdot \Phi)$ for any rank 3 Hitchin pair $(E,\Phi)$ in
the nilpotent cone $\chi^{-1}(0)\subset (E,\Phi) \in
\sM_L(3,d)$. Since the case $\Phi=0$ is trivial and the case when
$\Phi$ is a regular nilpotent has already been covered, it only
remains to consider the case when $\Phi\neq 0$ and $\Phi^2 \equiv
0$. For completeness we state the full result.

\begin{Th}\label{NilpotentConeRk3}
Let $(E,\Phi)$ be a stable Hitchin pair of rank $3$ and degree $d$ which
represents a point in the nilpotent cone
$\chi^{-1}(0) \sse \mathcal{M}_{L}(3,d)$. Then one of the following alternatives holds: 
\begin{enumerate}[(a)]
\item The Higgs field $\Phi$ vanishes identically and $\displaystyle
  \lim_{z\rightarrow \infty}(E,z\cdot \Phi) = (E,\Phi)  = (E,0)$.
\item The Higgs field $\Phi$ is a regular nilpotent (i.e.,
  $\Phi^{2}\neq 0$) and there is a filtration
 \[
  E = E_1 \supset E_2 \supset E_3 \supset E_{4} = 0
 \]
with each step of co-dimension one and such that
$\Phi(E_j) \subset E_{j+1} \otimes L$ for $j=1,2,3$.
In this case, 
\begin{equation}
  (E^{\infty},\Phi^{\infty}) = \displaystyle \lim_{z\rightarrow \infty}(E,z\cdot \Phi) = 
 \Big(\bar{E}_1 \oplus \bar{E}_2 \oplus \bar{E}_3,
 \left(
    \begin{array}[c]{c c c}
           0       &      0       &    0\\
     \varphi_{1}  &      0       &    0\\
           0       & \varphi_{2} &    0
    \end{array}
 \right)
 \Big)
 \end{equation}
is a Hodge bundle of type $(1,1,1)$ where
\[
\bar{E}_j = E_j / E_{j+1}\quad \textmd{and}\quad \varphi_{j}: \bar{E}_{j-1} \rightarrow \bar{E}_{j} \otimes L 
\]
is induced by $\Phi$.
\item The Higgs field $\Phi$ satisfies $\Phi^{2} = 0$ but does not
  vanish identically, and there is a filtration
 \[
  E = E_1 \supset E_2 \supset E_{3} \supset E_{4} = 0
 \]
with each step of co-dimension one and satisfying
$\Phi(E_j) \subset E_{j+2} \otimes L$ for $j=1,2$.
The topological invariants of $E_2$ and $E_3$ are constrained by the inequalities
\begin{equation}
  \label{eq:5}
  \mu(E) - \deg(L)/2 < \mu(E/E_2\oplus E_3) < \mu(E) + \deg(L)/2.
\end{equation}
Moreover, 
\begin{enumerate}[(i)]
 \item[(c.1.)]  if $\mu(E_1/E_2\oplus E_3) < \mu(E)$ then
   \begin{equation}
     \label{eq:3}
     (E^{\infty},\Phi^{\infty})
     = \displaystyle \lim_{z\rightarrow \infty}(E,z\cdot \Phi) = 
 \Big(E_1/E_2 \oplus E_2,
 \left(
    \begin{array}[c]{c c}
            0          &      0\\
       \varphi    &      0
    \end{array}
 \right)
 \Big)
  \end{equation}
is a Hodge bundle of type $(1,2)$ where $\varphi\colon E_1/E_2 \rightarrow E_2\otimes
L$ is induced by $\Phi$ and,
  \item[(c.2.)] if $\mu(E_1/E_2\oplus E_3) > \mu(E)$ then
    \begin{equation}
      \label{eq:6}
      (E^{\infty},\Phi^{\infty})
      = \displaystyle \lim_{z\rightarrow \infty}(E,z\cdot \Phi) = 
 \Big(E_1/E_3\oplus E_3,
 \left(
    \begin{array}[c]{c c}
            0          &      0\\
       \varphi    &      0
    \end{array}
 \right)
 \Big)
 \end{equation}
is a Hodge bundle of type $(2,1)$ where $\varphi\colon E_1/E_3 \rightarrow E_3\otimes
L$ is induced by $\Phi$.
 \end{enumerate}
\end{enumerate}
\end{Th}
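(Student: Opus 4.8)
The plan is to dispatch the three alternatives in turn. Alternative (a) is immediate, since $(E,0)$ is already a fixed point of the $\bC^*$-action, and alternative (b) is the content of Theorem~\ref{NilpotentConeRkR} specialised to $r=3$. All the work is in alternative (c), where $\Phi\neq 0$ but $\Phi^2=0$. First I would pin down the linear algebra of such a $\Phi$: since $\Phi^2=0$ forces $\im\Phi$ into $(\ker\Phi)\otimes L$, a rank count on a rank-three bundle shows that $\Phi$ has generic rank exactly one. I would then set $E_2$ to be the saturated kernel of $\Phi$, a subbundle of rank two, and $E_3$ to be the saturation of $\im(\Phi)\otimes L^{-1}$, a line subbundle contained in $E_2$; this produces the filtration $E=E_1\supset E_2\supset E_3\supset 0$ with steps of codimension one and with $\Phi(E_1)\subseteq E_3\otimes L$ and $\Phi(E_2)=0$, which is exactly $\Phi(E_j)\subseteq E_{j+2}\otimes L$. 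Writing $\bar E_j=E_j/E_{j+1}$, the induced map $\varphi\colon\bar E_1\to\bar E_3\otimes L$ is nonzero.

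Next I would establish the numerical constraints~\eqref{eq:5}. Writing $d_j=\deg\bar E_j$, stability of $(E,\Phi)$ applied to the $\Phi$-invariant subbundles $E_3$ and $E_2=\ker\Phi$ gives $d_3<\mu(E)$ and, since $\deg E_2=d-d_1$, the inequality $\mu(E_2)<\mu(E)$ rearranges to $d_1>\mu(E)$. On the other hand the nonvanishing of the morphism of line bundles $\varphi$ gives the Higgs-theoretic bound $d_1-d_3\leq\deg(L)$. Combining these, $(d_1+d_3)/2\leq d_3+\deg(L)/2<\mu(E)+\deg(L)/2$ and symmetrically $(d_1+d_3)/2\geq d_1-\deg(L)/2>\mu(E)-\deg(L)/2$, which is~\eqref{eq:5} once one notes that $E/E_2\oplus E_3=\bar E_1\oplus\bar E_3$ has slope $(d_1+d_3)/2$.

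For the limit itself I would imitate the proof of Theorem~\ref{NilpotentConeRkR}. Fix a smooth splitting $E\cong\bar E_1\oplus\bar E_2\oplus\bar E_3$, so that $\Phi$ has a single nonzero block $\varphi$ in the $(3,1)$ slot and $\dbar_E$ is lower triangular with diagonal $\dbar_j$ and off-diagonal terms $\beta_{21},\beta_{31},\beta_{32}$. There are exactly two ways to group the three line pieces into a weight-one Hodge bundle carrying $\varphi$: grouping $\bar E_2$ with $\bar E_3$ (type $(1,2)$) or grouping $\bar E_2$ with $\bar E_1$ (type $(2,1)$); these correspond to the gauge families $g(z)=\mathrm{diag}(1,z,z)$ and $g(z)=\mathrm{diag}(1,1,z)$ respectively. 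Conjugating $(\dbar_E,z\Phi)$ by either and letting $z\to\infty$, the same computation as before shows that $\varphi$ survives while the appropriate $\beta$'s are killed: the first family retains $\beta_{32}$ and yields $(\bar E_1\oplus E_2,\Phi^\infty)$, the second retains $\beta_{21}$ and yields $(E/E_3\oplus E_3,\Phi^\infty)$, matching~\eqref{eq:3} and~\eqref{eq:6}. Because $g(z)\in\mathcal{G}^\bC$, each conjugate represents the same point of $\sM_L(3,d)$ as $(E,z\Phi)$ for every $z$, so whichever family has a \emph{polystable} limit computes the genuine limit in the moduli space.

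The heart of the matter, and the step I expect to be the main obstacle, is then a stability comparison that selects the correct family. I would analyse the $\Phi^\infty$-invariant subbundles of each candidate. For the type-$(1,2)$ candidate the only invariant subbundle not already controlled by the stability of $(E,\Phi)$ is $\bar E_1\oplus E_3$, of slope $(d_1+d_3)/2$, so this candidate is stable precisely when $(d_1+d_3)/2<\mu(E)$. For the type-$(2,1)$ candidate the analogous new invariant subbundle is $\bar E_2=E_2/E_3$, of slope $d_2=d-(d_1+d_3)$, so it is stable precisely when $d_2<\mu(E)$, i.e.\ when $(d_1+d_3)/2>\mu(E)$; all remaining invariant subbundles (such as $E_3$, the line subbundles of $E_2$, and the rank-two subbundles containing $E_3$) have slope $<\mu(E)$ directly from the stability of $(E,\Phi)$, since each of them, or its preimage in $E$, is a $\Phi$-invariant subbundle of $E$ of the same degree. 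Finally, coprimality of $r=3$ and $d$ forces $\mu(E)\notin\bZ$, hence $d_2\neq\mu(E)$ and the boundary case $(d_1+d_3)/2=\mu(E)$ cannot occur; thus exactly one candidate is stable, and it is the genuine limit, giving the dichotomy (c.1)--(c.2).
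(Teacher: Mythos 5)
Your proposal is correct and follows essentially the same route as the paper: the same saturated-kernel/image filtration, the same stability-plus-nonzero-line-bundle-map argument for \eqref{eq:5}, the same gauge families (your $\mathrm{diag}(1,z,z)$ and $\mathrm{diag}(1,1,z)$ are exactly the paper's block gauges $\mathrm{diag}(1,z)$ for the splittings $E\cong E_1/E_2\oplus E_2$ and $E\cong E_1/E_3\oplus E_3$), and the same stability dichotomy via the critical invariant subbundles $E_1/E_2\oplus E_3$ and $E_2/E_3$. Your closing observation that coprimality of $3$ and $d$ rules out the boundary case $\mu(E_1/E_2\oplus E_3)=\mu(E)$ is a worthwhile explicit addition that the paper leaves implicit.
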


\begin{proof}
If $\Phi$ vanishes identically it is clear that the statement of case (a) holds and,
when $\Phi$ is a regular nilpotent, the statement of case (b) follows from
Theorem~\ref{NilpotentConeRkR} with $r=3$.

It  remains to consider the case when $\Phi\neq 0$ and $\Phi^2 \equiv 0$. Then, we may consider:
\[
 E_2 = \widetilde{\ker(\Phi)} \subset E_1=E\quad\text{and}\quad\text E_3 =
 \widetilde{\im(\Phi)}\otimes L^{-1} \subset E_2,
\]
where the tildes indicate taking the saturation of a subsheaf. We note
that, necessarily from our assumptions on $\Phi$, that $\rk(E_2)=2$,
$\rk(E_3)=1$, and that we obtain a filtration with the properties
stated in case (c).

We proceed to prove the constraints \eqref{eq:5}. From stability of $(E,\Phi)$ we have the inequalities
\begin{align}
  \label{eq:7}
  \mu(E_3)&<\mu(E)  \iff 3\deg(E_3)<d,\\
  \label{eq:8}
  \mu(E_2)&<\mu(E)  \iff 3\deg(E_2)<2d,
\end{align}
since $E_2$ and $E_3$ are $\Phi$-invariant subbundles of
$E$. Moreover, $\Phi$ induces a non-zero map of line bundles
\begin{math}
  E/E_2\rightarrow E_3\otimes L
\end{math}
and hence
\begin{equation}
  \label{eq:9}
  \deg(E_3) + \deg(L) \geq d-\deg(E_2).
\end{equation}
Now, using \eqref{eq:9} and \eqref{eq:8} we obtain
\begin{align*}
  2\mu(E/E_2\oplus E_3) &= d-\deg(E_2)+\deg(E_3)\\
                        &\geq 2d-2\deg(E_2)-\deg(L) \\
  &>\frac{2}{3}d-\deg(L)
\end{align*}
which is the first of the inequalities \eqref{eq:5}. Similarly, from
using \eqref{eq:9} and \eqref{eq:7} we obtain
\begin{align*}
  2\mu(E/E_2\oplus E_3) &= d-\deg(E_2)+\deg(E_3)\\
                        &\leq 2\deg(E_3)+\deg(L) \\
  &<\frac{2}{3}d+\deg(L)
\end{align*}
which is the second of the inequalities \eqref{eq:5}.

It remains to identify the limit of $(E,z\cdot\Phi)$ as
$z\rightarrow\infty$. For this we take, as usual, a smooth splitting
\begin{displaymath}
  E \underset{C^\infty}{\cong} E_1/E_2\oplus E_2/E_3 \oplus E_3.
\end{displaymath}
With respect to this splitting we have, from the definitions of $E_2$
and $E_3$, that
\begin{displaymath}
  \Phi=\left(
    \begin{array}[c]{c c c}
      0       & 0 & 0\\
      0       & 0 & 0\\
      \varphi & 0 & 0
    \end{array}
 \right).
\end{displaymath}
With respect to each of the smooth splittings $E\cong E_1/E_2\oplus E_2$
and $E\cong E_1/E_3\oplus E_3$ we can take a family of smooth complex gauge
transformations $g(z)\in\mathcal{G}^\bC$ defined by
\[
 g(z) = 
 \left(
	    \begin{array}{c c}
	     1 & 0 \\
	     0 & z
	    \end{array}
 \right)
\]
(interpreting each entry as a block of the appropriate size). Exactly
the same argument as in the proof of Theorem~\ref{NilpotentConeRkR}
shows that we have the convergence in the configuration space, up to
gauge equivalence, stated in each of the sub-cases (c.1.) and
(c.2.). It remains to prove that the convergence also holds in the
moduli space, i.e., that the Hitchin pairs in \eqref{eq:3} and
\eqref{eq:6} are stable under the respective hypotheses on
$\mu(E/E_2\oplus E_3)$.

\paragraph{Case (c.1.)}
The proper non-trivial $\Phi^\infty$-invariant subbundles $F\subset E^\infty$ are of two kinds:
\begin{enumerate}[(1)]
\item $F\subseteq E_2\subseteq E^\infty$ any non-zero subbundle (which
  may equal $E_2$). In this case $F$ defines a
  $\Phi$-invariant subbundle of the stable Hitchin pair $(E,\Phi)$ and hence
  $\mu(F)<\mu(E)=\mu(E^\infty)$ as desired. 
\item $F= E_1/E_3\oplus E_3\subseteq E^\infty$. In this case
  $\mu(F) = \mu(E_1/E_3\oplus E_3) < \mu(E)=\mu(E^\infty)$ by
  hypothesis.
\end{enumerate}

\paragraph{Case (c.2.)}
Again, the proper non-trivial $\Phi^\infty$-invariant subbundles $F\subset E^\infty$ are of two kinds:
\begin{enumerate}[(1)]
\item $F=L \oplus E_3 \subseteq E^\infty$ for a proper subbundle
  $L\subseteq E_1/E_3$ (which may be zero). In this case we can lift
  $L$ to a subbundle $\tilde{L}\subset E$ and we note that
  $E_3\subseteq \tilde{L}$. Hence $V\subseteq E$ is $\Phi$-invariant
  and $\mu(L\oplus E_3)=\mu(V)<\mu(E)=\mu(E^\infty)$ as we wanted.
\item $F= E_2/E_3\subseteq E_1/E_3\subseteq E^\infty$. In this case
  $\mu(F) = \mu(E_2/E_3)$ and we have
  \begin{align*}
    \mu(E_1/E_2\oplus E_3) &=
                             \frac{1}{2}\big(3\mu(E_1)-2\mu(E_2)+\mu(E_3)\big)\\
    &=\frac{1}{2}\big(3\mu(E)-\mu(E_2/E_3)\big).
  \end{align*}
  Hence the hypothesis $\mu(E_1/E_2\oplus E_3)>\mu(E)$ is equivalent
  to $\mu(E_2/E_3)<\mu(E)$, as desired.
\end{enumerate}
\end{proof}

\section*{Acknowledgement}
\addcontentsline{toc}{section}{Acknowledgement}

The authors are members of the Vector Bundles and Algebraic Curves
(VBAC) research group.

\end{document}